\numberwithin{equation}{section}
\def\<{\langle}
\def\>{\rangle}
\newtheorem{theorem}{Theorem}[section]
\newtheorem{proposition}[theorem]{Proposition}
\newtheorem{lemma}[theorem]{Lemma}
\newtheorem{remark}[theorem]{Remark}
\newtheorem{definition}[theorem]{Definition}
\begin{document}
\title{A special concavity property for positive Hessian quotient operators}
\author{Pengfei Guan and Marcin Sroka}
\address{Department of Mathematics\\
         McGill University\\
         Montreal, Quebec. H3A 2K6, Canada.}
\email{pengfei.guan@mcgill.ca}
\address{Faculty of mathematics and computer science, Jagiellonian University, \L ojasiewicza 6, 30-348, Krak\'ow, Poland}
\email{marcin.sroka@uj.edu.pl}
\thanks{Research of the first author was supported in part by NSERC Discovery Grant, research of the second author was supported in part by  National Science Center of Poland grant no. 2021/41/B/ST1/01632.}

\dedicatory{Dedicated to the centennial of the birth of Louis Nirenberg (1925–2020)}

\begin{abstract} We establish a special concavity property for positive Hessian quotient operators $\frac{\sigma_n(W)}{\sigma_{n-k}(W)}, \ 1\le k\le n-1$. As a consequence, we prove a Jacobi inequality for general symmetric tensor satisfying positive Hessian quotient equation on Riemannian manifolds. \end{abstract}
\keywords {Hessian quotient operators, concavity property, Jacobi inequality}
\subjclass{35K55, 35B65, 53A05, 58G11}

\maketitle

\section{Introduction}
Concavity properties play vital role in regularity estimates for geometric fully nonlinear elliptic partial differential equations. An important class of these type equations consists of Hessian equations \[\sigma_k(W)=f\] for $1 \le k\le n$ and Hessian quotient equations \[Q_{k,l}:=\frac{\sigma_k(W)}{\sigma_{l}(W)}=f\]
for $ 1\le l<k\le n$. In the above $\sigma_k$ is the $k$-th elementary symmetric polynomial in $\mathbb R^n$ and for any $n\times n$ symmetric matrix $W$, $\sigma_k(W)$ is defined as the symmetric function of the eigenvalues of $W$. 

We recall G\r{a}rding's $k$-cone
\[\Gamma_k=\{\lambda=(\lambda_1,\cdots, \lambda_n)\in \mathbb R^n \ | \ \forall_{1\le m\le k} \ \sigma_m(\lambda)>0\}.\]
G\r{a}rding's hyperbolic polynomial theory yields the concavity of operators 
\begin{center} $\sigma^{\frac1k}_k(W)$ and $\left(\frac{\sigma_k(W)}{\sigma_l(W)}\right)^{\frac1{k-l}}$\end{center} for $W\in \Gamma_k$, see, e.g., \cite{CNS85}. 

For a smooth function $F(W)$ of $W=(w_{\alpha\beta})$, we define
\[F^{\alpha\beta}(W)=\frac{\partial F(W)}{\partial w_{\alpha\beta}}, \quad
F^{\alpha\beta,\gamma \eta}(W)=\frac{\partial^2 F(W)}{\partial w_{\alpha\beta}\partial{w_{\gamma \eta}}}.\]
Provided $F$ is concave in $W$ we have for any symmetric $\xi=\left(\xi_{\alpha \beta}\right)$:
\[F^{\alpha\beta,\gamma \eta}\xi_{\alpha \beta}\xi_{\gamma\eta}\le 0. \]
In the case $F=Q^{\frac1{k-l}}_{k,l}=\left(\frac{\sigma_{k}}{\sigma_{l}}\right)^{\frac1{k-l}}$, this is equivalent to
\begin{equation}\label{oc}Q_{k,l}^{\alpha\beta,\gamma\eta}(W)\xi_{\alpha \beta}\xi_{\gamma\eta}\le \left(1-\frac1{k-l}\right)\frac{\left(Q_{k,l}^{\alpha\beta}(W)\xi_{\alpha \beta}\right)^2}{Q_{k,l}(W)}.\end{equation}
In many cases, it is desirable to have a better term for controlling $\frac{w^2_{11,1}}{w_{11}}$ where $w_{11}$ is the largest
eigenvalue of $W$, for example the Jacobi inequalities in \cite{Lu24, S25}. In this article, we establish the following special concavity property of $F=\frac{\sigma_n}{\sigma_{n-k}}$ for $1\le k \le n-1$, an important class of nonlinear differential operators in geometric analysis.

\begin{theorem} \label{concavityproperty-k} For $F(W)=\frac{\sigma_n}{\sigma_{n-k}}(W), \ 1\le k\le n-1$, where $W\in \Gamma_n$ is assumed to be diagonal with eigenvalues $\lambda_1\ge \lambda_2\ge \cdots\ge  \lambda_n>0$, and for any $1\ge \tilde \delta>0$, there exists positive constant $ \epsilon_0$ depending only on $n, k, \tilde \delta$ such that for any symmetric $\xi_{\alpha \beta}$, $1\leq \alpha, \beta \leq n$:
\begin{eqnarray} \label{fineconcavity-k} 
- F^{\alpha\beta,\gamma \eta}\xi_{\alpha \beta}\xi_{\gamma\eta} &\ge &
 (1+\epsilon_0) F^{11} \frac{\xi_{11}^2}{\lambda_1} 
 +(1-\tilde \delta) \sum_{\alpha \ge 2} F^{\alpha \alpha}\frac{\xi^2_{\alpha \alpha}}{\lambda_\alpha}
 - \frac{\left( \sum_{\alpha=1}^n F^{\alpha \alpha}\xi_{\alpha \alpha}\right)^2}{F}\nonumber \\
 && +\sum\limits_{\substack{\alpha, \gamma \ge 2 \\ \alpha \neq \gamma}}F^{\alpha \alpha} \frac{\xi^2_{\alpha \gamma}}{\lambda_\gamma}+
 \left(1+\frac{n-k}{n-1}\right)\sum_{\alpha \ge 2}F^{\alpha \alpha}\frac{\xi^2_{\alpha 1}}{\lambda_1}.\end{eqnarray} 
 In particular, in the context of Section \ref{jacobi}, for every $1\le i \le n$ and $\xi_{\alpha \beta}=\nabla_iw_{\alpha \beta}=w_{\alpha\beta, i}$, we have
\begin{eqnarray} \label{prejacobi}
- F^{\alpha\beta,\gamma \eta}\xi_{\alpha \beta}\xi_{\gamma\eta} &\ge &
 (1+\epsilon_0) F^{11} \frac{w_{11,i}^2}{\lambda_1} 
 +(1-\tilde \delta) \sum_{\alpha \ge 2} F^{\alpha \alpha}\frac{w^2_{\alpha \alpha, i}}{\lambda_\alpha}
 - \frac{F_i^2}{F}\nonumber .
 \end{eqnarray}
\end{theorem}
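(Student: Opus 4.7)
The plan is to compute $-F^{\alpha\beta,\gamma\eta}\xi_{\alpha\beta}\xi_{\gamma\eta}$ at the diagonal $W$ via the classical decomposition into the diagonal part $-\sum_{\alpha,\beta}F_{\alpha\beta}\xi_{\alpha\alpha}\xi_{\beta\beta}$ and the off-diagonal part $-\sum_{\alpha\neq\beta}\frac{F^{\alpha\alpha}-F^{\beta\beta}}{\lambda_\alpha-\lambda_\beta}\xi_{\alpha\beta}^2$, and to compare these term-by-term against the desired lower bound. A convenient parametrization is $F=1/\sigma_k(\mu)$ with $\mu_i=1/\lambda_i$, under which $F^{\alpha\alpha}=F^2\sigma_{k-1;\alpha}(\mu)/\lambda_\alpha^2$ and Euler's identity yields $\sum_\alpha F^{\alpha\alpha}\lambda_\alpha=kF$.

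An explicit calculation, using the combinatorial identity $\sigma_{k-1;\alpha}(\mu)-\sigma_{k-1;\beta}(\mu)=(\mu_\beta-\mu_\alpha)\sigma_{k-2;\alpha\beta}(\mu)$ for the off-diagonal quotient and direct differentiation for the diagonal Hessian of $F$, yields the compact identity
\[
-F^{\alpha\beta,\gamma\eta}\xi_{\alpha\beta}\xi_{\gamma\eta}
= 2\sum_{\alpha,\beta}\frac{F^{\alpha\alpha}\xi_{\alpha\beta}^2}{\lambda_\beta}
-\frac{2(\sum_\alpha F^{\alpha\alpha}\xi_{\alpha\alpha})^2}{F}
+ F^2\sum_{\alpha\neq\beta}\frac{\sigma_{k-2;\alpha\beta}(\mu)}{\lambda_\alpha^2\lambda_\beta^2}(\xi_{\alpha\alpha}\xi_{\beta\beta}-\xi_{\alpha\beta}^2).
\]
Subtracting the target and grouping terms according to whether the indices contain $1$, the required inequality splits cleanly into a pointwise off-diagonal piece (in the $\xi_{\alpha\beta}^2$ coefficients, $\alpha\neq\beta$) and a refined diagonal piece (in the $\xi_{\alpha\alpha}$ and $(\sum F^{\alpha\alpha}\xi_{\alpha\alpha})^2/F$ coefficients).

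The off-diagonal piece is handled by the elementary inequality $\sigma_{k-1;\alpha}(\mu)\ge \mu_\beta\sigma_{k-2;\alpha\beta}(\mu)$, which rephrases as $F^2\sigma_{k-2;\alpha\beta}(\mu)/(\lambda_\alpha^2\lambda_\beta^2)\le F^{\alpha\alpha}/\lambda_\beta$. Pairs with both indices $\ge 2$ give the required $F^{\alpha\alpha}/\lambda_\gamma$ coefficient without any boost; for pairs involving index $1$, the extra auxiliary terms $2F^{11}\xi_{1\beta}^2/\lambda_\beta$ produced by the $2\sum_{\alpha,\beta}F^{\alpha\alpha}\xi_{\alpha\beta}^2/\lambda_\beta$ expansion above supply the enhancement; tracking coefficients, the arithmetic factor $(n-k)/(n-1)$ appears as the ratio controlling how much of this surplus transfers into the $\xi_{\alpha 1}^2/\lambda_1$ coefficient.

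The crux, and the hardest step, is the diagonal estimate, which reduces to showing
\[
\frac{Q}{F}-F^2 g''(\mu)[\zeta,\zeta]\ \le\ (1-\epsilon_0)\frac{F^{11}\xi_{11}^2}{\lambda_1}+(1+\tilde\delta)\sum_{\alpha\ge 2}\frac{F^{\alpha\alpha}\xi_{\alpha\alpha}^2}{\lambda_\alpha},
\]
where $Q=(\sum_\alpha F^{\alpha\alpha}\xi_{\alpha\alpha})^2$, $g=\sigma_k(\mu)$, and $\zeta_\alpha=\xi_{\alpha\alpha}/\lambda_\alpha^2$. The concavity of $g^{1/k}$ on the positive cone (Newton-Maclaurin) gives $F^2 g''\le (k-1)Q/(kF)$, so $Q/F-F^2 g''\ge Q/(kF)$; a one-term Cauchy-Schwarz with weights $\lambda_\alpha$ and $\sum F^{\alpha\alpha}\lambda_\alpha=kF$ gives the matching upper bound $Q/(kF)\le \sum_\alpha F^{\alpha\alpha}\xi_{\alpha\alpha}^2/\lambda_\alpha$. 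The needed estimate sits precisely in the window between these two, and the $\epsilon_0>0$ slack must be extracted by a two-term Cauchy-Schwarz isolating the $\alpha=1$ summand: exploiting $\lambda_1\ge\lambda_\alpha$ for $\alpha\ge 2$ one prevents simultaneous saturation of the Newton-Maclaurin and Cauchy-Schwarz inequalities, and the resulting quantitative remainder, absorbed into the coefficient of $\xi_{11}^2/\lambda_1$, yields $\epsilon_0=\epsilon_0(n,k,\tilde\delta)>0$.
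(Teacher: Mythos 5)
Your algebraic setup is correct: the compact identity
\[
-F^{\alpha\beta,\gamma\eta}\xi_{\alpha\beta}\xi_{\gamma\eta}
= 2\sum_{\alpha,\beta}\frac{F^{\alpha\alpha}\xi_{\alpha\beta}^2}{\lambda_\beta}
-\frac{2\big(\sum_\alpha F^{\alpha\alpha}\xi_{\alpha\alpha}\big)^2}{F}
+ F^2\sum_{\alpha\neq\beta}\frac{\sigma_{k-2;\alpha\beta}(\mu)}{\lambda_\alpha^2\lambda_\beta^2}\big(\xi_{\alpha\alpha}\xi_{\beta\beta}-\xi_{\alpha\beta}^2\big)
\]
does hold (it is a regrouping of the paper's display (\ref{quadratic})), and the off-diagonal bookkeeping you sketch — using $\sigma_{k-1;\alpha}(\mu)\ge\mu_\beta\,\sigma_{k-2;\alpha\beta}(\mu)$ for the pairs with both indices $\ge 2$, and the extra surplus $2F^{11}\xi_{1\beta}^2/\lambda_\beta$ together with $\sigma_{k-1}(\kappa|\alpha 1)\ge\frac{n-k}{n-1}\sigma_{k-1}(\kappa|\alpha)$ for the pairs touching $1$ — reproduces exactly the paper's treatment of $I_3$, $J_1$, $J_2$, $K_1$. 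Up to this point your route is essentially the paper's, merely re-arranged.

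The proposal fails precisely at what you yourself call the crux. You need an \emph{upper} bound for
\[
\frac{Q}{F}-F^2 g''(\mu)[\zeta,\zeta],\qquad Q=\Big(\sum_\alpha F^{\alpha\alpha}\xi_{\alpha\alpha}\Big)^2,\ \ g=\sigma_k(\mu),\ \ \zeta_\alpha=\xi_{\alpha\alpha}/\lambda_\alpha^2,
\]
in terms of $(1-\epsilon_0)F^{11}\xi_{11}^2/\lambda_1+(1+\tilde\delta)\sum_{\alpha\ge2}F^{\alpha\alpha}\xi_{\alpha\alpha}^2/\lambda_\alpha$. The two inequalities you quote — concavity of $g^{1/k}$ giving $\frac{Q}{F}-F^2g''\ge\frac{Q}{kF}$, and Cauchy--Schwarz giving $\frac{Q}{kF}\le\sum_\alpha F^{\alpha\alpha}\xi_{\alpha\alpha}^2/\lambda_\alpha$ — are both \emph{lower} bounds for the quantity you must bound from above, so they do not close the estimate; the "window" they describe sits on the wrong side of the target. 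The correct borderline statement (with $\epsilon_0=\tilde\delta=0$) is the Guan--Li--Zhang inequality (Lemma~\ref{GLZ}), whose equality case $\xi=tW$ simultaneously kills the Newton--Maclaurin surplus in the mixed term ($\kappa_\alpha\xi_{\alpha\alpha}-\kappa_\beta\xi_{\beta\beta}\equiv0$). Thus no single "two-term Cauchy--Schwarz isolating $\alpha=1$" can extract $\epsilon_0>0$ uniformly: near $\xi=tW$ the boost on $\xi_{11}^2/\lambda_1$ cannot come from a strengthened Newton--Maclaurin term, and it must instead be paid for by the $\tilde\delta$ loss on the $\alpha\ge2$ sum. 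The paper handles this by the dichotomy (\ref{case1})/(\ref{a2}): either some $|\xi_{\alpha\alpha}/\lambda_\alpha|\ge\delta_0|\xi_{11}/\lambda_1|$, in which case the comparison (\ref{fif1comp}), $F^{\alpha\alpha}\lambda_\alpha/\lambda_1\ge F^{11}$, lets a $\tilde\delta$-fraction of that $\alpha$-term be converted into $\tilde\delta\delta_0^2\,F^{11}\xi_{11}^2/\lambda_1$; or all those ratios are small, in which case $\xi$ is quantitatively far from $tW$, the sum-of-squares term $I_1$ is bounded below by $c_{n,k}'(1-\delta_0)^2F^{11}\xi_{11}^2/\lambda_1$, and the boost comes from $I_1$. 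Your proposal contains neither the dichotomy nor any mechanism by which $\tilde\delta$ enters $\epsilon_0$, and without it the argument does not yield a positive $\epsilon_0$.
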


\begin{remark} Assumption $k\le n-1$ is required in Theorem \ref{concavityproperty-k}. For $F=\sigma_n$, if $W$ is diagonal,
\[- F^{\alpha\beta,\gamma \eta}\xi_{\alpha \beta}\xi_{\gamma\eta}=
\sum_{\alpha, \beta=1}^n\frac{F^{\alpha\alpha}\xi_{\alpha\beta}^2}{w_{\beta\beta}}-\frac{\left(\sum_{\alpha=1}^nF^{\alpha \alpha}\xi_{\alpha \alpha}\right)^2}{F}.  \]
Simply letting \[\xi_{11}\neq 0 \ \mbox{and} \ \xi_{\alpha\beta}=0 \ \mbox{otherwise}\] indicates that the special concavity property (\ref{fineconcavity-k}) does not hold when $k=n$, i.e. for the Monge-Amp\`ere operator, in general. 
\end{remark}

The special concavity (\ref{fineconcavity-k}) is related to the following special property of $\sigma_k(W)$, for $W\in \Gamma_n$, which was proved in \cite{GLZ09} using Lemma 2.4 in \cite{GM03}.

\begin{lemma}\label{GLZ}[Guan-Li-Zhang] For $W\in \Gamma_n$, symmetric $\xi=\left(\xi_{\alpha\beta}\right)$ and $1\le k\le n$, it holds
\begin{equation} 
 \left(\sigma_{k}^{\alpha\beta,\gamma\eta}(W)+w^{\alpha\eta}\sigma_{k}^{\beta\gamma}(W)\right)\xi_{\alpha \beta}\xi_{\gamma\eta}\ge \frac{\left(\sum_{\alpha=1}^n\sigma_k^{\alpha \alpha}\xi_{\alpha \alpha}\right)^2}{\sigma_k(W)}\end{equation}
where $(w^{\alpha\beta})=W^{-1}$.\end{lemma}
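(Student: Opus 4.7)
My plan is to reduce the claimed matrix inequality to a one-parameter log-convexity statement. Both sides are invariant under the simultaneous orthogonal conjugation $(W,\xi)\mapsto (O^TWO,O^T\xi O)$, so I first assume $W=\mathrm{diag}(\lambda_1,\ldots,\lambda_n)$ with $\lambda_\alpha>0$. Writing $\sigma_{k-1}(\lambda|\alpha)$ and $\sigma_{k-2}(\lambda|\alpha,\gamma)$ for the elementary symmetric polynomials in $\lambda$ with the indicated variables deleted, the standard formulas $\sigma_k^{\beta\beta}(W)=\sigma_{k-1}(\lambda|\beta)$, $\sigma_k^{\alpha\alpha,\gamma\gamma}(W)=\sigma_{k-2}(\lambda|\alpha,\gamma)$ for $\alpha\neq\gamma$, and $\sigma_k^{\alpha\beta,\beta\alpha}(W)=-\sigma_{k-2}(\lambda|\alpha,\beta)$ for $\alpha\neq\beta$, together with $w^{\alpha\eta}=\delta_{\alpha\eta}/\lambda_\alpha$, let me rewrite the left-hand side of the lemma as
\[
\sum_{\alpha\neq\gamma}\sigma_{k-2}(\lambda|\alpha,\gamma)\xi_{\alpha\alpha}\xi_{\gamma\gamma}-\sum_{\alpha\neq\beta}\sigma_{k-2}(\lambda|\alpha,\beta)\xi_{\alpha\beta}^2+\sum_{\alpha,\beta}\frac{\sigma_{k-1}(\lambda|\beta)}{\lambda_\alpha}\xi_{\alpha\beta}^2.
\]

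Next I will separate the off-diagonal $\xi_{\alpha\beta}$ (with $\alpha\neq\beta$) from the diagonal entries $\mu_\alpha:=\xi_{\alpha\alpha}$. The identity $\sigma_{k-1}(\lambda|\beta)=\lambda_\alpha\sigma_{k-2}(\lambda|\alpha,\beta)+\sigma_{k-1}(\lambda|\alpha,\beta)$ (valid for $\alpha\neq\beta$) collapses the two off-diagonal contributions into the manifestly nonnegative residue $\sum_{\alpha\neq\beta}\lambda_\alpha^{-1}\sigma_{k-1}(\lambda|\alpha,\beta)\xi_{\alpha\beta}^2\geq 0$, where nonnegativity uses $\lambda\in\Gamma_n$. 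Since the right-hand side of the lemma depends only on the $\mu_\alpha$, this reduces the statement to the purely diagonal inequality
\[
\sum_\alpha\frac{\sigma_{k-1}(\lambda|\alpha)}{\lambda_\alpha}\mu_\alpha^2+\sum_{\alpha\neq\gamma}\sigma_{k-2}(\lambda|\alpha,\gamma)\mu_\alpha\mu_\gamma\geq\frac{\bigl(\sum_\alpha\sigma_{k-1}(\lambda|\alpha)\mu_\alpha\bigr)^2}{\sigma_k(\lambda)}.
\]

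The key step is to recognize this diagonal inequality as the log-convexity of a single scalar function at $s=0$. Setting $\nu_\alpha:=\mu_\alpha/\lambda_\alpha$ and $g(s):=\sigma_k(\lambda_1e^{s\nu_1},\ldots,\lambda_n e^{s\nu_n})$, the definition $\sigma_k(x)=\sum_{|S|=k}\prod_{\alpha\in S}x_\alpha$ expands
\[
g(s)=\sum_{|S|=k}\Bigl(\prod_{\alpha\in S}\lambda_\alpha\Bigr)\exp\!\Bigl(s\sum_{\alpha\in S}\nu_\alpha\Bigr),
\]
a positive linear combination of exponentials because $\lambda\in\Gamma_n$. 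Hence $\log g$ is convex (the classical log-sum-exp fact, which follows from the symmetrization identity $g(s)g''(s)-g'(s)^2=\tfrac12\sum_{S,T}c_Sc_T(b_S-b_T)^2 e^{s(b_S+b_T)}\geq 0$), yielding $g(0)g''(0)\geq g'(0)^2$. A direct chain-rule computation identifies $g(0)=\sigma_k(\lambda)$, $g'(0)=\sum_\alpha\sigma_{k-1}(\lambda|\alpha)\mu_\alpha$, and $g''(0)=\sum_\alpha\lambda_\alpha^{-1}\sigma_{k-1}(\lambda|\alpha)\mu_\alpha^2+\sum_{\alpha\neq\gamma}\sigma_{k-2}(\lambda|\alpha,\gamma)\mu_\alpha\mu_\gamma$, so log-convexity is exactly the diagonal inequality above.

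The main obstacle is purely bookkeeping: correctly expanding $(\sigma_k^{\alpha\beta,\gamma\eta}+w^{\alpha\eta}\sigma_k^{\beta\gamma})\xi_{\alpha\beta}\xi_{\gamma\eta}$ at a diagonal $W$ and carrying out the off-diagonal cancellation with correct signs and factors. Once the problem is recast as log-convexity of the log-sum-of-exponentials $g(s)$, the analytic heart of the lemma is classical and carries no genuine difficulty; this is presumably the role played by Lemma 2.4 of \cite{GM03} in the original Guan--Li--Zhang argument.
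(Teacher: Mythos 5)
Your proof is correct, and it reaches the lemma by a genuinely different route from the one the paper relies on. The paper does not prove Lemma \ref{GLZ} itself: it cites \cite{GLZ09}, whose argument rests on the exact sum-of-squares identity of Lemma 2.4 in \cite{GM03} --- the same identity reproduced as (\ref{guanconcav}) in Section 2 (note that display is missing a minus sign on its right-hand side; the sign actually used in (\ref{quadratic2}) is the correct one, and with it the identity agrees with the direction of your inequality). Your reduction steps coincide with the paper's: diagonalize $W$, expand $\sigma_k^{\alpha\beta,\gamma\eta}$ at a diagonal matrix, and absorb the off-diagonal terms through $\sigma_{k-1}(\lambda|\beta)=\lambda_\alpha\sigma_{k-2}(\lambda|\alpha\beta)+\sigma_{k-1}(\lambda|\alpha\beta)$, which is exactly (\ref{sigmadecomposition}). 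Where you diverge is the diagonal inequality: you obtain it from log-convexity of $s\mapsto\sigma_k(\lambda_1e^{s\nu_1},\dots,\lambda_ne^{s\nu_n})$, i.e.\ a Cauchy--Schwarz/log-sum-exp argument symmetrized over pairs of $k$-subsets, whereas Guan--Ma regroup the same deficit $g(0)g''(0)-g'(0)^2$ index-pairwise into the explicit formula $\frac{1}{2\sigma_k}\sum_{\alpha\neq\beta}\bigl(\sigma_{k-1}^2(\lambda|\alpha\beta)-\sigma_k(\lambda|\alpha\beta)\sigma_{k-2}(\lambda|\alpha\beta)\bigr)\lambda_\alpha\lambda_\beta\bigl(\tfrac{\mu_\alpha}{\lambda_\alpha}-\tfrac{\mu_\beta}{\lambda_\beta}\bigr)^2$, whose nonnegativity is Newton--Maclaurin. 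Your argument is shorter and fully suffices for the lemma as stated, but it only certifies that the deficit is nonnegative; the exact identity quantifies it, and that extra information --- a lower bound in terms of the differences $\kappa_\alpha\xi_{\alpha\alpha}-\kappa_\beta\xi_{\beta\beta}$, fed through (\ref{newtonmac}) and (\ref{estI1}) --- is precisely what the present paper needs to extract the gain $\epsilon_0$ in Theorem \ref{concavityproperty-k}. So your proof establishes Lemma \ref{GLZ} but would not substitute for the Guan--Ma identity in the rest of the paper.
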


Lemma 2.4 in \cite{GM03} and Lemma \ref{GLZ} are crucial for the Constant Rank Theorems in \cite{GLZ09}. It is not coincident that one may relate the upper bound of the eigenvalues of $W$ for \[\frac{\sigma_n(W)}{\sigma_{n-k}(W)}=f\] to the lower bound of the eigenvalues of $\widetilde{W}=W^{-1}$ for \[\sigma_{k}(\widetilde{W})=\frac1f.\] 

We refer to \cite{Lu24, S25} for some applications of Theorem \ref{concavityproperty-k}. 

\section{Proof of Theorem \ref{concavityproperty-k}
}

\begin{proof}
We set the notation. Let
\begin{eqnarray}  \begin{gathered} W=(w_{\alpha \beta}),\\
\widetilde{W}=W^{-1}\end{gathered}\end{eqnarray}
and, as always, the elements of the inverse matrix have the upper indexes. Moreover let 
\begin{eqnarray} \label{orderedlambda} \lambda_1 \geq \lambda_2 \geq ... \geq \lambda_n>0 \end{eqnarray} be the eigenvalues of $W$ and we set $\kappa_\alpha = \frac{1}{\lambda_\alpha}$ for $1 \le \alpha \le n$. Then, of course
\begin{eqnarray} \label{orderedkappa}  0 < \kappa_1 \leq \kappa_2 \leq ... \leq \kappa_n \end{eqnarray} and one observes that the operator can be rewritten as
\begin{eqnarray}   F(W)= \frac{\sigma_n(W)}{\sigma_{n-k}(W)}=\frac{1}{\sigma_k(\widetilde{W})}.\end{eqnarray} 

A straightforward computation, utilizing in particular the formula for the derivatives of the inverse matrix, yields
\begin{eqnarray} 
F^{\alpha\beta}(W) =
\delta^\alpha_\beta \frac{\sigma_{k-1}(\kappa|\alpha)}{\sigma_k^2(\widetilde{W})}\kappa_\alpha^2, 
\end{eqnarray}
\begin{eqnarray}
\sigma_k^2(\widetilde{W})F^{\alpha \beta,\gamma \eta}(W) &=& 
-\sigma_{k-2}(\kappa|\alpha \gamma)\delta_\alpha^\beta\delta_\gamma^\eta \kappa^2_{\alpha}\kappa^2_{\gamma}+\sigma_{k-2}(\kappa|\alpha \gamma)\delta_\alpha^\eta\delta_\beta^\gamma \kappa^2_{\alpha}\kappa^2_{\gamma}\nonumber
\\&& - \sigma_{k-1}(\kappa|\gamma)\delta_\beta^\gamma \delta^\alpha_\eta \kappa^2_{\gamma} \kappa_{\alpha}
- \sigma_{k-1}(\kappa|\alpha) \delta_\alpha^\eta \delta^\beta_\gamma \kappa^2_{\alpha}\kappa_{\gamma}\nonumber \\
&&+2 \sigma_{k-1}(\kappa|\alpha)\sigma_{k-1}(\kappa|\gamma)\delta_\alpha^\beta \delta_\gamma^\eta \frac{\kappa^2_{\alpha}\kappa^2_{\gamma}}{\sigma_k(\kappa)}.
\end{eqnarray}
From the above, after contracting with $\xi_{\alpha \beta}$, we obtain
\begin{eqnarray} \label{quadratic} \sigma_k^2(\kappa)F^{\alpha \beta,\gamma \eta}(W)\xi_{\alpha\beta}\xi_{\gamma\eta}& =&
-\sigma_{k-2}(\kappa|\alpha \gamma) \kappa^2_{\alpha}\kappa^2_{\gamma}\xi_{\alpha\alpha}\xi_{\gamma\gamma}
+\sigma_{k-2}(\kappa|\alpha \gamma)\kappa^2_{\alpha}\kappa^2_{\gamma} \xi_{\alpha\gamma}^2\nonumber 
\\&& - \sigma_{k-1}(\kappa|\gamma) \kappa^2_{\gamma}\kappa^2_{\alpha}\xi_{\alpha\gamma}^2
- \sigma_{k-1}(\kappa|\alpha) \kappa^2_{\alpha}\kappa^2_{\gamma}\xi_{\alpha\gamma}^2 
\nonumber \\
&&+2 \frac{\left(\sigma_{k-1}(\kappa|\alpha)\kappa^2_{\alpha}\xi_{\alpha\alpha}\right)^2}{\sigma_k(\kappa)}\nonumber \\
&=&
\Big[ -\sum\limits_{\alpha\not=\gamma} \sigma_{k-2}(\kappa|\alpha \gamma) \kappa^2_{\alpha}\kappa^2_{\gamma}{\gamma}\xi_{\alpha\alpha}\xi_{\gamma\gamma} 
-\sigma_{k-1}(\kappa|\alpha)  \kappa^3_{\alpha}\xi_{\alpha\alpha}^2 
\nonumber \\
&&+ \frac{\left(\sigma_{k-1}(\kappa|\alpha)\kappa^2_{\alpha}\xi_{\alpha\alpha}\right)^2}{\sigma_k(\kappa)}\Big]\nonumber \\
&& + \Big\{-\sigma_{k-1}(\kappa|\alpha)  \kappa_{\alpha}^3\xi_{\alpha\alpha}^2 
+ \frac{\left(\sigma_{k-1}(\kappa|\alpha)\kappa^2_{\alpha}\xi_{\alpha\alpha}\right)^2}{\sigma_k(\kappa)}\Big\}\nonumber \\
&&+ \Big( \sum\limits_{\alpha\not=\gamma} \sigma_{k-2}(\kappa|\alpha \gamma) \kappa_{\alpha}^2\kappa_{\gamma}^2\xi_{\alpha\gamma}^2 - \sum\limits_{\alpha\not=\gamma}\sigma_{k-1}(\kappa|\alpha)\kappa_{\alpha}^2\kappa_{\gamma}\xi_{\alpha\gamma}^2\Big).
  \end{eqnarray}

The point here is that the term in the square brackets has a sign. Moreover, it has a concrete and compact formula that follows from the calculations of \cite{GM03, GLZ09}. Namely, from formula $(5.9)$ and the two following ones in \cite{GLZ09} we obtain the expression for this term which, moreover, from the last formula in the proof of Lemma 2.4 in \cite{GM03} is seen to provide:
\begin{eqnarray} \label{guanconcav} \begin{gathered}
\Big[ -\sum\limits_{\alpha\not=\gamma} \sigma_{k-2}(\kappa|\alpha \gamma) \kappa_{\alpha}^2\kappa_{\gamma}^2\xi_{\alpha\alpha}\xi_{\gamma\gamma} 
-\sigma_{k-1}(\kappa|\alpha)  \kappa_{\alpha}^3\xi_{\alpha\alpha}^2 
+ \frac{\left(\sigma_{k-1}(\kappa|\alpha)\kappa_{\alpha}^2\xi_{\alpha\alpha}\right)^2}{\sigma_k(\kappa)}\Big]\\=
\frac{1}{2\sigma_k(\kappa)}\Big[ \sum\limits_{\alpha\not=\beta} \left(\sigma_{k-1}^2(\kappa|\alpha\beta)-\sigma_k(\kappa|\alpha\beta)\sigma_{k-2}(\kappa|\alpha\beta)\right)\kappa_{\alpha}\kappa_{\beta}\left(\kappa_{\alpha}\xi_{\alpha\alpha}-\kappa_{\beta}\xi_{\beta\beta}{}\right)^2\Big].
\end{gathered}\end{eqnarray}

Applying (\ref{guanconcav}) in (\ref{quadratic}) we obtain

\begin{eqnarray} \label{quadratic2} \begin{gathered}\sigma_k^2(\kappa)F^{\alpha \beta,\gamma \eta}(W)\xi_{\alpha\beta}\xi_{\gamma\eta} =\\
-\frac{1}{2\sigma_k(\kappa)}\Big[ \sum\limits_{\alpha\not=\beta} \left(\sigma_{k-1}^2(\kappa|\alpha\beta)-\sigma_k(\kappa|\alpha\beta)\sigma_{k-2}(\kappa|\alpha\beta)\right)\kappa_{\alpha}\kappa_{\beta}\left(\kappa_{\alpha}\xi_{\alpha\alpha}-\kappa_{\beta}\xi_{\beta\beta}\right)^2\Big]\\
+\Big\{-\sigma_{k-1}(\kappa|\alpha)  \kappa_{\alpha}^3\xi_{\alpha\alpha}^2 
+ \frac{\left(\sigma_{k-1}(\kappa|\alpha)\kappa_{\alpha}^2\xi_{\alpha\alpha}\right)^2}{\sigma_k(\kappa)}\Big\}\\
+ \Big( \sum\limits_{\alpha\not=\gamma} \sigma_{k-2}(\kappa|\alpha \gamma) \kappa_{\alpha}^2\kappa_{\gamma}^2 \xi_{\alpha\gamma}^2 - 2\sum\limits_{\alpha\not=\gamma}\sigma_{k-1}(\kappa|\alpha)\kappa_{\alpha}^2\kappa_{\gamma}^2\lambda_{\gamma}\xi_{\alpha\gamma}^2\Big).
\end{gathered}\end{eqnarray}

Let us denote:
\begin{eqnarray} \label{Is} 
\quad \quad I_1'&=&\frac{1}{2\sigma_k(\kappa)} \sum\limits_{\alpha\not=\beta} \left(\sigma_{k-1}^2(\kappa|\alpha\beta)-\sigma_k(\kappa|\alpha\beta)\sigma_{k-2}(\kappa|\alpha\beta)\right)\kappa_{\alpha}\kappa_{\beta}\left(\kappa_{\alpha}\xi_{\alpha\alpha}-\kappa_{\beta}\xi_{\beta\beta}\right)^2,\nonumber \\
I_2&=&\sigma_{k-1}(\kappa|\alpha)  \kappa_{\alpha}^3\xi_{\alpha\alpha}^2 
- \frac{\left(\sigma_{k-1}(\kappa|\alpha)\kappa_{\alpha}^2\xi_{\alpha\alpha}\right)^2}{\sigma_k(\kappa)},\\
I_3&=&-\sum\limits_{\alpha\not=\gamma} \sigma_{k-2}(\kappa|\alpha \gamma) \kappa_{\alpha}^2\kappa_{\gamma}^2 \xi_{\alpha\gamma}^2 + 2\sum\limits_{\alpha\not=\gamma}\sigma_{k-1}(\kappa|\alpha)\kappa_{\alpha}^2\kappa_{\gamma}^2\lambda_{\gamma}\xi_{\alpha\gamma}^2.\nonumber
\end{eqnarray}
 
By Newton-Maclaurin inequality for any $1\le \alpha \not = \beta \le n$:
\begin{eqnarray} \label{newtonmac} \sigma_{k-1}^2(\kappa|\alpha\beta)-\sigma_k(\kappa|\alpha\beta)\sigma_{k-2}(\kappa|\alpha\beta)\geq c_{n,k}\sigma_{k-1}(\kappa|\alpha\beta),\end{eqnarray}
while for any $1 \le \gamma \not= \alpha \le n$:
\begin{eqnarray}\label{sigmadecomposition} \lambda_{\gamma}\sigma_{k-1}(\kappa|\alpha)=\sigma_{k-2}(\kappa|\alpha\gamma)+\lambda_{\gamma}\sigma_{k-1}(\kappa|\alpha\gamma).\end{eqnarray}

Applying (\ref{newtonmac}) in $I_1'$ provides:
\begin{eqnarray} \begin{gathered}
I_1' \geq \frac{c_{n,k}}{2\sigma_k(\kappa)} \sum\limits_{\alpha\not=\beta} \sigma_{k-1}^2(\kappa|\alpha\beta) \kappa_{\alpha}\kappa_{\beta}\left(\kappa_{\alpha}\xi_{\alpha\alpha}-\kappa_{\beta}\xi_{\beta\beta}\right)^2:=I_1.
\end{gathered} \end{eqnarray}

While applying (\ref{sigmadecomposition}) for $I_3$ we get:
\begin{eqnarray} \label{I3} 
I_3=\sum\limits_{\alpha\not=\gamma} \Big(\sigma_{k-1}(\kappa|\alpha)+\sigma_{k-1}(\kappa|\alpha\gamma)\Big)\lambda_{\gamma}\kappa_{\alpha}^2\kappa_{\gamma}^2 \xi_{\alpha\gamma}^2.\end{eqnarray}

Thus we have obtained so far:
\begin{eqnarray} \label{consise} \sigma_k^2(\kappa)F^{\alpha \beta,\gamma \eta}(W)\xi_{\alpha\beta}\xi_{\gamma\eta} \leq -I_1-I_2-I_3.\end{eqnarray}

Let us continue the argument in following two cases for any $1 > \delta_0 >0$.

\textbf{Case 1.} Suppose there exists $\alpha > 1$ such that
\begin{eqnarray} \label{case1} \left| \frac{\xi_{\alpha\alpha}}{\lambda_\alpha}\right| \geq \delta_0 \left| \frac{\xi_{11}}{\lambda_1}\right|.\end{eqnarray} 
Then for any $1 \ge \tilde \delta>0$:   
\begin{eqnarray} \label{I2est}  \begin{gathered}
\frac{1}{\sigma_k^2(\kappa)}I_2=
\frac{\sigma_{k-1}(\kappa|\alpha)}{\sigma_k^2(\kappa)}  \kappa_{\alpha}^3\xi_{\alpha\alpha}^2 
- \left(\frac{\sigma_{k-1}(\kappa|\alpha)}{\sigma_k^2(\kappa)}\kappa_{\alpha}^2\xi_{\alpha\alpha}\right)^2\sigma_k(\kappa)\\
=F^{\alpha\alpha}(\lambda)  \frac{\xi_{\alpha\alpha}^2}{\lambda_\alpha} 
- \frac{1}{F(\lambda)} \Big( F^{\alpha\alpha}(\lambda) \xi_{\alpha\alpha}\Big)^2\\
=F^{11}(\lambda)  \frac{\xi_{11}^2}{\lambda_1}+\tilde \delta \sum\limits_{\alpha \geq 2}F^{\alpha\alpha}(\lambda)  \frac{\xi_{\alpha\alpha}^2}{\lambda_\alpha} \\
+(1-\tilde \delta) \sum\limits_{\alpha \geq 2} F^{\alpha\alpha}(\lambda)  \frac{\xi_{\alpha\alpha}^2}{\lambda_\alpha} 
- \frac{1}{F(\lambda)} \Big( F^{\alpha\alpha}(\lambda) \xi_{\alpha\alpha}\Big)^2\\
\geq \Big(1+\tilde \delta\delta_0^2 \Big)F^{11}(\lambda)  \frac{\xi_{11}^2}{\lambda_1}
+(1-\tilde \delta) \sum\limits_{\alpha \geq 2}F^{\alpha\alpha}(\lambda)  \frac{\xi_{\alpha\alpha}^2}{\lambda_\alpha}
- \frac{1}{F(\lambda)} \Big( F^{\alpha\alpha}(\lambda) \xi_{\alpha\alpha}\Big)^2
\end{gathered}\end{eqnarray}
where the last inequality follows from, for every $\alpha$ satisfying (\ref{case1}),
\begin{eqnarray} \begin{gathered} 
F^{\alpha \alpha}(\lambda) \frac{\xi_{\alpha\alpha}^2}{\lambda_\alpha} = F^{\alpha \alpha}(\lambda) \frac{\xi_{\alpha\alpha}^2}{\lambda_{\alpha}^2} \cdot \lambda_\alpha \geq \delta_0^2 \lambda_1 F^{11}(\lambda) \frac{\xi_{11}^2}{\lambda_1^2}=\delta_0^2 F^{11}(\lambda) \frac{\xi_{11}^2}{\lambda_1}.
\end{gathered}\end{eqnarray}
In the above calculation, we utilized that for any $\alpha >1$: 
\begin{eqnarray} \label{fif1comp} \begin{gathered} 
F^{\alpha \alpha}(\lambda) \frac{\lambda_\alpha}{\lambda_1} = \frac{\sigma_{k-1}(\kappa|\alpha)}{\sigma_k^2(\kappa)} \cdot \frac{1}{\lambda_\alpha\lambda_1} =  \frac{\sigma_{k-1}(\kappa|\alpha)\kappa_\alpha \kappa_1}{\sigma_k^2(\kappa)} 
\\ = \frac{\kappa_\alpha \kappa_1(\sigma_{k-1}(\kappa|\alpha 1) + \kappa_1 \sigma_{k-2}(\kappa|\alpha 1))}{\sigma_k^2(\kappa)} = \frac{ \kappa_1(\sigma_{k-1}(\kappa|\alpha 1)\kappa_\alpha + \kappa_\alpha \kappa_1 \sigma_{k-2}(\kappa|\alpha 1))}{\sigma_k^2(\kappa)}
\\ \geq \frac{ \kappa_1(\sigma_{k-1}(\kappa|\alpha 1)\kappa_1 + \kappa_\alpha \kappa_1 \sigma_{k-2}(\kappa|\alpha 1))}{\sigma_k^2(\kappa)} = \frac{ \kappa_1^2(\sigma_{k-1}(\kappa|\alpha 1) + \kappa_\alpha \sigma_{k-2}(\kappa|\alpha 1))}{\sigma_k^2(\kappa)} 
\\= \frac{ \kappa_1^2 \sigma_{k-1}(\kappa| 1)}{\sigma_k^2(\kappa)} = F^{11}(\lambda).
\end{gathered}\end{eqnarray}

Applying (\ref{I2est}) in (\ref{consise}) we obtain
\begin{eqnarray} \label{consisecase1} \begin{gathered}
-F^{\alpha \beta,\gamma \eta}(\lambda)\xi_{\alpha\beta}\xi_{\gamma\eta} \geq 
\Big(1+\tilde \delta\delta_0^2 \Big)F^{11}(\lambda)  \frac{\xi_{11}^2}{\lambda_1}
+(1-\tilde \delta) \sum\limits_{\alpha \geq 2}F^{\alpha\alpha}(\lambda)  \frac{\xi_{\alpha\alpha}^2}{\lambda_\alpha}\\
- \frac{1}{F(\lambda)} \Big( F^{\alpha\alpha}(\lambda) \xi_{\alpha\alpha}\Big)^2 +F^2(\lambda)\Big(I_1+I_3\Big).
\end{gathered}\end{eqnarray}

\textbf{Case 2.} As the complementary case we assume now that for every $\alpha > 1$: 
\begin{eqnarray} \label{a2} \left| \frac{\xi_{\alpha\alpha}}{\lambda_\alpha}\right| < \delta_0 \left| \frac{\xi_{11}}{\lambda_1}\right|.\end{eqnarray}

In this case, we start with the estimate

\begin{eqnarray} \label{estI1} \begin{gathered}
I_1 \geq \frac{c_{n,k}}{2\sigma_k(\kappa)} \sum\limits_{\alpha > 1} \sigma_{k-1}^2(\kappa|\alpha 1) \kappa_{\alpha}\kappa_{1}\left(\kappa_{\alpha}\xi_{\alpha\alpha}-\kappa_{\beta}\xi_{\beta\beta}\right)^2\\
\geq \frac{c_{n,k}}{2\sigma_k(\kappa)} \sum\limits_{\alpha > 1} \sigma_{k-1}^2(\kappa|\alpha 1) \kappa_{\alpha}(1-\delta_0)^2 \frac{\xi_{11}^2}{\lambda_1^3}.
\end{gathered}\end{eqnarray}

For $1\le k\le n-1$ and $\alpha=n-k+1\ge 2$, from the properties of $\sigma_k$ operator and (\ref{orderedkappa}), we get:
\begin{eqnarray}  
\sigma_{k-1}(\kappa|\alpha 1) \kappa_{\alpha} \geq \widetilde{c_{n,k}} \sigma_{k}(\kappa), \quad 
\sigma_{k-1}(\kappa|\alpha 1) \geq \widetilde{\widetilde{c_{n,k}}}\sigma_{k-1}(\kappa|1).
\end{eqnarray}

Applying the above inequalities in (\ref{estI1}) we obtain:
\begin{eqnarray} \label{estI1second} \begin{gathered}
I_1 \geq c_{n,k}'(1-\delta_0)^2 \frac{\sigma_{k-1}(\kappa|1)}{\sigma_k^2(\kappa)}\kappa_1^2 \frac{\xi_{11}^2}{\lambda_1}\sigma_k^2(\kappa)= c_{n,k}'(1-\delta_0)^2F^{11}(\lambda)\frac{\xi_{11}^2}{\lambda_1}\sigma_k^2(\kappa)
\end{gathered}.\end{eqnarray}
Note that $c^{'}_{n,k}$ can be tracked explicitly. 

Applying (\ref{estI1second}) in (\ref{consise}) and estimating $I_2$ similarly as in Case 1 we get:
\begin{eqnarray} \label{consise2} \begin{gathered}
-F^{\alpha \beta,\gamma \eta}(\lambda)\xi_{\alpha\beta}\xi_{\gamma\eta} \geq 
\Big(1+c_{n,k}'(1-\delta_0)^2\Big)F^{11}(\lambda)\frac{\xi_{11}^2}{\lambda_1}\\ 
+\sum\limits_{\alpha \geq 2}F^{\alpha\alpha}(\lambda)  \frac{\xi_{\alpha\alpha}^2}{\lambda_\alpha}
- \frac{1}{F(\lambda)} \Big( F^{\alpha\alpha}(\lambda) \xi_{\alpha\alpha}\Big)^2 +I_3F^2(\lambda).\end{gathered}\end{eqnarray}

\bigskip

Pick $\delta_0$ such that 
\[c_{n,k}'(1-\delta_0)^2=\tilde \delta \delta_0^2:=\epsilon^2_{n,k,\tilde \delta}.\] In both Cases 1 and 2 we arrive at
\begin{eqnarray} \label{consisefinal} \begin{gathered}
-F^{\alpha \beta,\gamma \eta}(\lambda)\xi_{\alpha\beta}\xi_{\gamma\eta} \geq 
\Big(1+\epsilon_{n,k,\tilde \delta}^2\Big)F^{11}(\lambda)\frac{\xi_{11}^2}{\lambda_1}\\ 
+(1-\tilde \delta)\sum\limits_{\alpha \geq 2}F^{\alpha\alpha}(\lambda)  \frac{\xi_{\alpha\alpha}^2}{\lambda_\alpha}
- \frac{1}{F(\lambda)} \Big( F^{\alpha\alpha}(\lambda) \xi_{\alpha\alpha}\Big)^2 +I_3F^2(\lambda).\end{gathered}\end{eqnarray}

We see thus in order to finish the proof of the first part of the claim it is enough to bound $I_3$ further. Note that the case $n=2$ is trivial here, we may assume $n\ge 3$. For that goal we compute:

\begin{eqnarray} \label{I3decomp} \begin{gathered}
F^2(\lambda)I_3 = 
\sum\limits_{\alpha\not=\gamma} \frac{\sigma_{k-1}(\kappa|\alpha)}{\sigma_k^2(\kappa)}\kappa_\alpha^2 \frac{\xi_{\alpha\gamma}^2}{\lambda_\gamma} 
+\sum\limits_{\alpha\not=\gamma}  \frac{\sigma_{k-1}(\kappa|\alpha\gamma)}{\sigma_k^2(\kappa)} \kappa_\alpha^2 \frac{\xi_{\alpha\gamma}^2}{\lambda_\gamma} \\ 
\geq \sum\limits_{\alpha\not=\gamma} F^{\alpha\alpha}(\lambda) \frac{\xi_{\alpha\gamma}^2}{\lambda_\gamma}+
\sum\limits_{\alpha >1 }  \frac{\sigma_{k-1}(\kappa|\alpha 1)}{\sigma_k^2(\kappa)} \kappa_\alpha^2 \frac{\xi_{\alpha 1}^2}{\lambda_1} \\ = J_1+J_2+J_3+K_1 \end{gathered} \end{eqnarray}
where
\begin{eqnarray} \label{Js} \begin{gathered}
J_1= \sum\limits_{\alpha \geq 2} F^{\alpha\alpha}(\lambda) \frac{\xi_{\alpha 1}^2}{\lambda_1},\\
J_2= \sum\limits_{\gamma \geq 2} F^{11}(\lambda) \frac{\xi_{1 \gamma}^2}{\lambda_\gamma},\\
J_3=\sum\limits_{\substack{\alpha\not=\gamma \\ \alpha, \gamma \geq 2}} F^{\alpha\alpha}(\lambda) \frac{\xi_{\alpha\gamma}^2}{\lambda_\gamma},\\
K_1=\sum\limits_{\alpha >1 }  \frac{\sigma_{k-1}(\kappa|\alpha 1)}{\sigma_k^2(\kappa)} \kappa_\alpha^2 \frac{\xi_{\alpha 1}^2}{\lambda_1}.\end{gathered} \end{eqnarray}
We note that thanks to (\ref{orderedkappa}):
\begin{eqnarray}  \sigma_{k-1}(\kappa|\alpha 1)\ge \frac{n-k}{n-1} \sigma_{k-1}(\kappa|\alpha) \text{ for any } \alpha>1,\end{eqnarray}
resulting in
\begin{eqnarray} \label{j1k1} \begin{gathered}
     J_1+K_1 \geq \sum\limits_{\alpha \geq 2} \Big(F^{\alpha\alpha}(\lambda)+F^{\alpha \alpha}(\lambda)\frac{n-k}{n-1}  \Big)\frac{\xi_{\alpha 1}^2}{\lambda_1}
\\ \geq 
\left(1+\frac{n-k}{n-1} \right)\sum\limits_{\alpha \geq 2} F^{\alpha\alpha}(\lambda) \frac{\xi_{\alpha 1}^2}{\lambda_1}.\end{gathered}\end{eqnarray}
We also note that due to (\ref{fif1comp}):
\begin{eqnarray} \label{j1j2} \begin{gathered}
J_1= \sum\limits_{\alpha \geq 2} F^{\alpha\alpha}(\lambda) \frac{\xi_{\alpha 1}^2}{\lambda_1}\geq 
\sum\limits_{\alpha \geq 2} F^{11}(\lambda) \frac{\xi_{1 \alpha}^2}{\lambda_\alpha}=J_2\end{gathered} \end{eqnarray}
thus (\ref{j1k1}) is qualitatively sharp regarding the terms $\xi_{1\alpha}$ for $\alpha>1$.

Finally, (\ref{j1k1}) together with (\ref{consisefinal}) and (\ref{I3decomp}) gives (\ref{fineconcavity-k}). 
\end{proof}

\section{Jacobi inequality}\label{jacobi}

For this section, let $(M,g,W)$ be an $n$ dimensional Riemannian manifold $(M,g)$ endowed with a smooth, symmetric $(0,2)$ tensor $W$. For the whole section, we denote by $\nabla$ the Levi-Civita connection of $g$. All derivatives below are covariant derivatives with respect to $\nabla$. Using $g$ and $W$ we introduce the endomorphism field
\begin{eqnarray}
    W_g:=g^{-1} \circ W : TM \xrightarrow{W} T^*M \xrightarrow{g^{-1}} TM. 
\end{eqnarray}
Later, by abuse of notation, we still denote $W_g$ by $W$. Let 
\[ \lambda(W_g)=(\lambda_1,...,\lambda_n)\] be a decreasingly ordered vector of eigenvalues of $W_g$. Again, in the computations we will suppress the argument and simply write $\lambda$ and assume $\lambda \in \Gamma_n$.
We still denote \begin{eqnarray*}
    F(W_g)=\frac{\sigma_n(W_g)}{\sigma_{n-k}(W_g)}
\end{eqnarray*}
for $1\le k \le n-1$. We consider the equation
\begin{eqnarray}\label{WE}
    F\left(W_g(x)\right)=f(x)
\end{eqnarray}
for $ x\in M$.

In the rest of this section, all computations will be done with respect to local orthonormal frames.

\begin{definition}
    We say the tensor $W$ on $(M,g)$ satisfies Condition $(A)$,  provided for an uniform constant $A>0$ independent of $W$, the inequalities
    \begin{eqnarray}\label{CA} \quad
    \begin{gathered}
        |\nabla_k w_{ij}(x)-\nabla_i w_{jk}(x)| \le A\left(1+|W(x)|_g\right),   \\
        |\nabla^2_{kl} w_{ij}(x)-\nabla^2_{ij}w_{kl}(x)| \le 
        A ( 1 + |W(x)|_g \\
        + |\nabla w_{ik}(x)|_g+|\nabla w_{jk}(x)|_g+|\nabla w_{il}(x)|_g+|\nabla w_{jl}(x)|_g)\end{gathered}
    \end{eqnarray}
    hold in any orthonormal frame and  at every point $x$ in $M$.
\end{definition}

\begin{definition}
We call a symmetric tensor $W$ a Q-Codazzi tensor, for a positive constant $Q$ independent of $W$, if there exist tensors $c$ and $q$ such that the equality
    \begin{eqnarray}\label{QC}\nabla_i w_{kl}-\nabla_k w_{il}=\sum_{m=1}^nc_{ik}^mw_{ m l}+q_{ikl}\end{eqnarray}
holds, where  \begin{eqnarray}\label{CC}|c_{ik}^m(x)|+|\nabla c_{ik}^m(x)|_g \le Q, \quad |q_{ikl}(x)|+ |\nabla q_{ikl}(x)|_g \le Q (1+|W(x)|_g)\end{eqnarray} for any orthonormal frame and at every point $x$ in $M$. \end{definition}

By Ricci identity, any Q-Codazzi tensor $W$ satisfies Condition (A). 
\begin{remark}The following is a list of examples of Q-Codazzi tensors.\begin{enumerate} \item  Codazzi tensors. \item Second fundamental forms of immersed hypersurfaces in Riemannian ambient space. 
\item $\nabla^2 u+\chi$ for any given symmetric $(0,2)$ tensor $\chi$ provided that $\nabla u$ is under control. 

\end{enumerate}\end{remark}

\begin{proposition}\label{Pb}
    Let $(M,g,W)$ and $F$ be as above. Assume that $W$ satisfies equation (\ref{WE}) and Condition (A). There exist $\epsilon(n,k)>0$ (depending only on $n, k$) and a constant $C$ (depending on $n, k, f$ and $A$ in (\ref{CA})) such that the following differential inequality holds for $b=\log (1+\lambda_1)$  in viscosity sense
    \begin{eqnarray}\label{Jb}
        \sum_{\alpha,\beta=1}^n F^{\alpha \beta}\left(\lambda(x)\right)b_{\alpha \beta}(x) &\ge& \epsilon(n,k) \sum_{\alpha,\beta=1}^n F^{\alpha \beta}\left(\lambda(x)\right) b_\alpha(x) b_\beta(x) \nonumber \\
        & &-C(n,k,f,A)  \mathcal F(x)-\frac{|\nabla f(x)|_g^2}{f(x)(\lambda_1(x)+1)}-\frac{|\nabla^2 f(x)|_g}{\lambda_1(x)+1}
    \end{eqnarray}
    at any $x\in M$, where $\mathcal F(x)=\sum\limits_{\alpha=1}^n F^{\alpha \alpha}(\lambda(x))$.
\end{proposition}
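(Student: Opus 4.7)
The plan is to apply the special concavity \eqref{prejacobi} to $\xi_{\alpha\beta}=w_{\alpha\beta,1}$ obtained by differentiating \eqref{WE} twice in the $e_1$-direction, and then turn the resulting inequality into a lower bound on $\sum F^{\alpha\beta}b_{\alpha\beta}$ through the perturbation formula for the simple top eigenvalue. Since \eqref{Jb} is stated in the viscosity sense, a standard perturbation $W\mapsto W-\varepsilon\,\mathrm{diag}(0,1,\dots,n-1)$ in a frame diagonalising $W(x_0)$ lets one assume $\lambda_1$ is simple and smooth in a neighbourhood of $x_0$; one may also assume $\lambda_1(x_0)\geq 1$, otherwise $b$ is bounded and \eqref{Jb} is trivial after enlarging $C$. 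Fixing such a frame so that $W(x_0)=\mathrm{diag}(\lambda_1,\dots,\lambda_n)$, the eigenvalue perturbation formula gives at $x_0$ the identity $b_\alpha=w_{11,\alpha}/(1+\lambda_1)$ and, after discarding the non-negative eigenvalue-gap sum,
\begin{equation*}
b_{\alpha\alpha}\;\geq\;\frac{w_{11,\alpha\alpha}}{1+\lambda_1}-\frac{w_{11,\alpha}^2}{(1+\lambda_1)^2}.
\end{equation*}
Since $F^{\alpha\beta}(W(x_0))$ is diagonal, $\sum F^{\alpha\beta}b_\alpha b_\beta=\sum_\alpha F^{\alpha\alpha}b_\alpha^2$ and $\sum F^{\alpha\beta}b_{\alpha\beta}\geq\sum_\alpha F^{\alpha\alpha}b_{\alpha\alpha}$.

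\textbf{Using the equation and Theorem \ref{concavityproperty-k}.} Differentiating \eqref{WE} twice in $e_1$ and commuting via the second line of Condition (A) yields
\begin{equation*}
\sum_\alpha F^{\alpha\alpha}w_{11,\alpha\alpha}=f_{11}-F^{\alpha\beta,\gamma\eta}w_{\alpha\beta,1}w_{\gamma\eta,1}+\mathcal E,\quad |\mathcal E|\leq CA\,\mathcal F\Bigl(1+\lambda_1+\sum_\alpha|\nabla w_{1\alpha}|_g\Bigr).
\end{equation*}
Applying \eqref{fineconcavity-k} to $\xi_{\alpha\beta}=w_{\alpha\beta,1}$ with a small fixed $\tilde\delta$, replacing $w_{\alpha 1,1}$ by $w_{11,\alpha}$ up to $O(A(1+\lambda_1))$ via the first line of Condition (A), dividing by $1+\lambda_1$ and subtracting $\sum F^{\alpha\alpha}b_\alpha^2$ then produces, for $\lambda_1\geq 1$, the Jacobi gain
\begin{equation*}
\frac{1}{1+\lambda_1}\Bigl[(1+\epsilon_0)F^{11}\frac{w_{11,1}^2}{\lambda_1}+\bigl(1+\tfrac{n-k}{n-1}\bigr)\sum_{\alpha\geq 2}F^{\alpha\alpha}\frac{w_{11,\alpha}^2}{\lambda_1}\Bigr]-\sum_\alpha F^{\alpha\alpha}b_\alpha^2\;\geq\;\epsilon(n,k)\sum_\alpha F^{\alpha\alpha}b_\alpha^2,
\end{equation*}
because the ratios $(1+c)(1+\lambda_1)/\lambda_1-1$ with $c\in\{\epsilon_0,\tfrac{n-k}{n-1}\}$ are bounded below by positive constants depending only on $n,k$ when $\lambda_1\geq 1$. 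The scalar $f$-pieces $f_{11}/(1+\lambda_1)$ and $-f_1^2/[f(1+\lambda_1)]$ match exactly the inhomogeneities on the right of \eqref{Jb}.

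\textbf{Error absorption and the main obstacle.} The scalar portion $CA\,\mathcal F$ of $\mathcal E/(1+\lambda_1)$ is absorbed into the $C(n,k,f,A)\mathcal F$ term. The delicate step — and the main technical obstacle — is the linear-gradient portion $CA\,\mathcal F\sum_\alpha|\nabla w_{1\alpha}|_g/(1+\lambda_1)$, which carries the first derivative of $W$ in directions transverse to $\nabla\lambda_1$. By the first line of Condition (A), each component $w_{1\alpha,\beta}$ equals, modulo $A(1+\lambda_1)$, one of $w_{11,\alpha}$ ($\beta=1$), $w_{\alpha\alpha,1}$ ($\beta=\alpha$), or $w_{\beta\alpha,1}$ (otherwise). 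Cauchy--Schwarz then absorbs the three resulting contributions into small fractions of the Jacobi term $\sum F^{\alpha\alpha}b_\alpha^2$, of $(1-\tilde\delta)\sum_{\alpha\geq 2}F^{\alpha\alpha}w_{\alpha\alpha,1}^2/[\lambda_\alpha(1+\lambda_1)]$, and of the off-diagonal sum $\sum_{\substack{\alpha\neq\gamma\\ \alpha,\gamma\geq 2}}F^{\alpha\alpha}w_{\alpha\gamma,1}^2/[\lambda_\gamma(1+\lambda_1)]$ supplied by \eqref{fineconcavity-k}; the inequalities $\lambda_\alpha,\lambda_\gamma\leq\lambda_1$ make each absorption lossless up to a final constant multiple of $\mathcal F$. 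It is precisely these extra positive terms in \eqref{fineconcavity-k} — the factor $1+\tfrac{n-k}{n-1}$ on $\xi_{\alpha 1}$ and the off-diagonal $\xi_{\alpha\gamma}$ sum, both \emph{absent} from the usual concavity \eqref{oc} — that make this absorption possible and close the argument.
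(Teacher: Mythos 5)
Your proposal is correct and follows essentially the same route as the paper: differentiate \eqref{WE} twice in the $e_1$ direction, commute derivatives using Condition (A), apply the special concavity \eqref{fineconcavity-k} to $\xi_{\alpha\beta}=w_{\alpha\beta,1}$, and absorb the resulting gradient-of-$W$ errors by Cauchy--Schwarz against the extra positive terms that \eqref{fineconcavity-k} supplies beyond ordinary concavity. The one minor variation is which positive term absorbs the diagonal error $\sum_{\alpha\ge2}F^{\alpha\alpha}|w_{\alpha\alpha,1}|/(1+\lambda_1)$: you discard the eigenvalue-gap contribution from the second-order variation of $\lambda_1$ and instead use the $(1-\tilde\delta)\sum_{\alpha\ge 2}F^{\alpha\alpha}\xi_{\alpha\alpha}^2/\lambda_\alpha$ term of \eqref{fineconcavity-k}, whereas the paper retains the eigenvalue-gap term (which, via Condition (A), produces $\sum_{\alpha>1}F^{\alpha\alpha}w_{\alpha\alpha,1}^2/[\lambda_1(\lambda_1+1)]$) and drops the $(1-\tilde\delta)$ piece in \eqref{concavappl}; since $\lambda_\alpha\le\lambda_1$, either positive term suffices and the two arguments close in the same way.
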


\begin{proof}
    We choose the normal coordinates around $x \in M$ such that $W_g(x)$ is diagonal. The standard calculations yield $ \lambda_{1,\alpha}=w_{11,\alpha}$ and
    \begin{eqnarray}
        \label{lambdasecondest} \lambda_{1.\alpha \alpha} \geq w_{11,\alpha \alpha}+2\sum_{\beta>\mu}\frac{w^2_{1\beta,\alpha}}{\lambda_1-\lambda_\beta}
    \end{eqnarray}
     in viscosity sense for $1\le \mu<n$ being the multiplicity of $\lambda_1$. We work only on the case $\mu=1$, and the argument below can be easily adapted for the general case $\mu\ge 1$. 
    
    In the rest of this section, a constant $C$ may change line by line while staying under control. We also note that $|W|_g \le n \lambda_1$ and
    \begin{eqnarray}
        \lambda_1 \ge C_{n,k} \inf_{x \in M} f^\frac{1}{k}(x) \ge C.
    \end{eqnarray}

Applying (\ref{lambdasecondest}), Condition $(A)$ for $W$ and Cauchy-Schwarz inequality gives
\begin{eqnarray}\label{fourthest}\begin{gathered}
    \sum_{\alpha=1}^nF^{\alpha\alpha}\frac{\lambda_{1,\alpha \alpha}}{\lambda_1+1} 
    \ge \sum_{\alpha=1}^nF^{\alpha\alpha}\frac{w_{11,\alpha\alpha}}{\lambda_1+1}+2\sum_{\alpha>1}F^{\alpha\alpha}\frac{w^2_{1\alpha,\alpha}}{\lambda_1(\lambda_1+1)}\\
    \ge \sum_{\alpha=1}^nF^{\alpha\alpha}\frac{w_{\alpha\alpha,11}-C\sum_m|w_{1\alpha,m}|-C(\lambda_1+1)}{\lambda_1+1}+\sum_{\alpha>1}F^{\alpha\alpha}\frac{w^2_{\alpha\alpha,1}}{\lambda_1(\lambda_1+1)}-C \mathcal F\\
    \ge -F^{\alpha \beta, \gamma \delta}\frac{w_{\alpha \beta,1}w_{\gamma \delta, 1}}{\lambda_1+1}+\frac{F_{11}}{\lambda_1+1}-C \mathcal F
   - C\sum_{\alpha=1}^nF^{\alpha \alpha}\frac{|w_{11,\alpha}|+|w_{\alpha\alpha,1}|}{\lambda_1+1}\\
   -C\sum\limits_{\substack{\alpha, \gamma \ge 2 \\ \alpha \neq \gamma}}
  F^{\alpha \alpha}\frac{|w_{\alpha\gamma,1}|}{\lambda_1+1}+\sum_{\alpha>1}F^{\alpha\alpha}\frac{w^2_{\alpha\alpha,1}}{\lambda_1(\lambda_1+1)}\\
  \ge -F^{\alpha \beta, \gamma \delta}\frac{w_{\alpha \beta,1}w_{\gamma \delta, 1}}{\lambda_1+1}+\frac{F_{11}}{\lambda_1+1}-C \mathcal F
   \\- C\sum_{\alpha=1}^nF^{\alpha \alpha}\frac{|w_{11,\alpha}|}{\lambda_1+1}
   -\sum\limits_{\substack{\alpha, \gamma \ge 2 \\ \alpha \neq \gamma}}F^{\alpha \alpha} \frac{w^2_{\alpha \gamma,1}}{\lambda_\gamma(\lambda_1+1)}.
\end{gathered}\end{eqnarray}

Applying (\ref{fineconcavity-k}) for $\xi_{\alpha\beta}=w_{\alpha\beta,1}$ we obtain 
\begin{eqnarray}\label{concavappl}
    - F^{\alpha\beta,\gamma \eta}w_{\alpha \beta,1}w_{\gamma\eta,1} &\ge &
 (1+\epsilon_0) F^{11} \frac{w_{11,1}^2}{\lambda_1}- \frac{\left(\sum_{\alpha=1}^nF^{\alpha \alpha}w_{\alpha \alpha,1}\right)^2}{F} \\
 && +\left(1+\frac{n-k}{n-1}\right)\sum_{\alpha \ge 2}F^{\alpha \alpha}\frac{w^2_{\alpha 1,1}}{\lambda_1}
 +\sum\limits_{\substack{\alpha, \gamma \ge 2 \\ \alpha \neq \gamma}}F^{\alpha \alpha} \frac{w^2_{\alpha \gamma,1}}{\lambda_\gamma}.\nonumber
\end{eqnarray}

Using again Condition (A), we see that for any $1 \le \alpha \le n$ and $1 >>\theta>0$:
\begin{eqnarray}\label{thirdordperm}
    w_{\alpha 1,1}^2\ge (1-\theta)w_{11,\alpha}^2-C_\theta(\lambda_1+1)^2.
\end{eqnarray}

Set $\epsilon_0'= \min \left\{ \epsilon_0,\frac{n-k}{n-1}\right\}$, by Cauchy-Schwarz inequality,
\begin{eqnarray}\label{thirdlinearest}
    -C\sum_{\alpha=1}^nF^{\alpha \alpha} \frac{|w_{11,\alpha}|}{\lambda_1+1} \ge -\sum_{\alpha=1}^nF^{\alpha \alpha}\left(\frac14 \epsilon_0' \left(\frac{w_{11,\alpha}}{\lambda_1+1}\right)^2+C\right).
\end{eqnarray}

Applying (\ref{concavappl}) - (\ref{thirdlinearest}) in (\ref{fourthest}) gives for $\epsilon_0'= \min \left\{ \epsilon_0,\frac{n-k}{n-1}\right\}$, for $\epsilon_0$ from Theorem \ref{concavityproperty-k},
\begin{eqnarray}\begin{gathered}
    F^{\alpha\alpha}\frac{\lambda_{1,\alpha \alpha}}{\lambda_1+1} \ge 
    \left(1+\frac12\epsilon_0'\right) F^{11} \frac{w_{11,1}^2}{\lambda_1(\lambda_1+1)}
 +\left(1+\frac12\epsilon_0'\right)\sum_{\alpha \ge 2}F^{\alpha \alpha}\frac{w^2_{11,\alpha}}{\lambda_1(\lambda_1+1)}\\
 - \frac{F_1^2}{F(\lambda_1+1)} +\frac{F_{11}}{\lambda_1+1}-C \mathcal F.\end{gathered}
\end{eqnarray}
This translates into
\begin{eqnarray}
    F^{\alpha\alpha}b_{\alpha \alpha} \ge 
    \frac12\epsilon_0' F^{\alpha \alpha}b_\alpha^2
 - \frac{F_1^2}{F(\lambda_1+1)} +\frac{F_{11}}{\lambda_1+1}-C \mathcal F
\end{eqnarray}
as required.
\end{proof}
\bigskip

\begin{remark} Under the same assumptions as in Proposition \ref{Pb}, let $$B(x)=\log \left(D+\log (\lambda_1(x)+1) \right)$$ for some $D>0$ depending on $n,k,f,A$. Then, the Jacobi inequality (\ref{Jb}) implies
\begin{eqnarray}\label{JB}
        \sum_{\alpha,\beta=1}^n F^{\alpha \beta}\left(\lambda(x)\right)B_{\alpha \beta}(x) &\ge& \frac{\epsilon(n,k)}2\left(D+\log (\lambda_1(x)+1)\right) \sum_{\alpha,\beta=1}^n F^{\alpha \beta}\left(\lambda(x)\right) B_\alpha(x) B_\beta(x) \nonumber \\
        & &-\frac{1}{\sqrt{D+\log (\lambda_1(x)+1)}} \left(\mathcal F(x)+\frac{|\nabla f(x)|_g^2}{f(x)(\lambda_1(x)+1)}+\frac{|\nabla^2 f(x)|_g}{\lambda_1(x)+1}\right).
    \end{eqnarray}
    \begin{proof}We have \[B_\alpha=\frac{b_\alpha}{D+b}, \quad B_{\alpha \beta}=\frac{b_{\alpha\beta}}{D+b}-\frac{b_\alpha b_\beta}{(D+b)^2}.\] If we pick $D$ sufficiently large such that 
    \[\sqrt{D+\log (\lambda_1+1)}>\frac{2}{\epsilon(n,k)}+C(n,k,f,A),\] then (\ref{JB}) follows from (\ref{Jb}). \end{proof}
\end{remark}
\begin{remark} Suppose the first inequality in (\ref{CA}) does not involve the zero order term. In this case the constants $C$ and $D$ in (\ref{Jb}) and (\ref{JB}) are independent of $f$. Moreover, if $f>0$ and $f\in C^2(M)$, then there is a constant $C$ depending only on $\|f\|_{C^2(M)}$ and $(M,g)$ such that \[|\nabla f(x)|^2\le C f(x)\] for any $x\in M$. As such, in this setting, (\ref{Jb}) and (\ref{JB}) are independent of the lower bound of $f$.   \end{remark}

\end{document}